\theoremstyle{plain}
\newtheorem{theorem}{Theorem}[section]
\newtheorem{lemma}[theorem]{Lemma}
\theoremstyle{remark}
\newtheorem{remark}[theorem]{Remark}
\newtheorem{conjecture}[theorem]{Conjecture}
\begin{document}

\articletype{This manuscript is to appear in \\ {\em Communications in Statistics---Theory and Methods}, \\   after a revision suggested by the referees under the new title:\\ 
{\small {\sf Characterizations of Arcsin and Related Distributions Based on a New Generalized Unimodality}} \\
with the DOI: {\tt 10.1080/03610926.2015.1006788}}

\title{{\itshape  Another Generalization of Unimodality}}

\author{{\sc Hazhir Homei}{$^{\ast}$\thanks{$^\ast$Corresponding author. Email: homei@tabrizu.ac.ir
\vspace{6pt}} \\
{\em{Department of Statistics, Faculty of Mathematical
Sciences, \\ University of Tabriz, 29 Bahman Boulevard, P.O.Box 51666--17766, IRAN.}}}\\\received{\today} }

\maketitle

\begin{abstract}
A general characterization for $\alpha$-unimodal distributions was provided by Alamatsaz (1985) who  later introduced a multivariate extension of them (Alamatsaz 1993). Here, by solving the related equations, another generalization for unimodality is presented. As a result of this generalization, a simpler  proof of a conjecture, as well as  a characterization for  generalized arcsin distributions and some generalizations of the author's earlier works, have been obtained. Last, but not the least, it is shown that some elementary methods can be more powerful than some more advanced techniques.

\begin{keywords}  Unimodality; Characterization; Generalized Arcsin Distribution.
\end{keywords}

\begin{classcode} 62E10; 62E15.
\end{classcode}

\end{abstract}

\section{Introduction}
Unimodality property is a long standing problem in  distribution theory, and has been studied by several authors; see \cite{alamat85} and its references.  The random variable $Z$ is called unimodal when there are independent random variables $X$ and $U$ such that   $$Z\,{\buildrel d \over =}\,U\cdot X,\eqno{(1)}$$
where $U$ has a uniform distribution on $[0,1]$. Several generalizations of $(1)$ has been studied in the literature (see e.g. \cite{alamat93}); here we generalized it as follows $$S_n\,{\buildrel d \over =}\,\langle{\bf R},{\bf X}\rangle,\eqno{(2)}$$ where ${\bf R}=(R_1,\cdots,R_n)$ is the Dirichlet random vector (and so $\sum_{i=1}^nR_i=1$) and ${\bf X}=(X_1,\cdots,X_n)$ is an arbitrary random vector which is independent from ${\bf R}$; the symbol $\langle\cdot,\cdot\rangle$ denotes the inner product. Our main motivation for this generalization was solving the forthcoming conjecture.

\subsection{A Conjecture On Arcsin Distributions}
Consider the inner product of two independent random vectors  ${\bf R}=(R_1,\cdots,R_n)$ and ${\bf X}=(X_1,\cdots,X_n)$  defined by
$$S_n\,{\buildrel d \over =}\,\langle{\bf R},{\bf X}\rangle=\sum_{i=1}^{n}R_i\cdot X_i  \qquad (n\geq 2).$$ Now, the  components of the random
vector ${\bf R}$  can be defined  as $R_{i}=U_{(i)}-U_{(i-1)}$  (for $i=1,\cdots,n-1$ and
$R_n= 1-\sum_{i=1}^{n-1} R_i$) where  $U_{(1)},\cdots,U_{(n-1)}$ are
order statistics of a random sample $U_1,\cdots,U_n$ from a uniform
distribution on $[0,1]$ with $U_{(0)}=0$ and $U_{(n)}=1$.
Note that
the distribution of ${\bf R}=(R_1,\cdots,R_n)$ is the Dirichlet
distribution $D_{n}(1,\cdots,1)$;  see (Wang et. al. 2011).

\begin{conjecture}\label{conj}{\rm
If the random
variables $X_1,\cdots,X_n$ are independent and have common Arcsin
distribution on $(-a,a)$, then $S_n$ will have a power semicircle
distribution on $(-a,a)$ with $\lambda=\frac{(n-1)}{2}$, i.e.,
$f(x;\lambda,a)=\frac{1}{\sqrt \pi {a^{2\lambda}}}\frac{\Gamma(\lambda+1)}
{\Gamma(\lambda+\frac{1}{2})}(a^2-x^2)^{\lambda-\frac{1}{2}} \quad (|x|<a)$˜ (see the conclusions of \cite{homei2014}).} \hfill $\blacklozenge$
\end{conjecture}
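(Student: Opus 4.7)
The plan is to reformulate the conjecture at the level of characteristic functions and then prove it by induction on $n$. The arcsin law on $(-a,a)$ has characteristic function $\phi_{X}(t)=J_{0}(at)$, while the power semicircle density with parameter $\lambda$ has characteristic function $\phi_{\lambda}(t)=\Gamma(\lambda+1)(2/(at))^{\lambda}J_{\lambda}(at)$. Conditioning on $\mathbf{R}$ and using the independence of the $X_{i}$'s, the conjecture reduces to the Bessel-type identity
\[
E\left[\prod_{i=1}^{n} J_{0}(atR_{i})\right]=\Gamma\!\left(\frac{n+1}{2}\right)\left(\frac{2}{at}\right)^{(n-1)/2}J_{(n-1)/2}(at),
\]
where $(R_{1},\ldots,R_{n})\sim D_{n}(1,\ldots,1)$.

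The base case $n=2$ is the classical fact that $UX_{1}+(1-U)X_{2}$, with $X_{1},X_{2}$ i.i.d.\ arcsin on $(-a,a)$ and $U\sim U(0,1)$, is uniform on $(-a,a)$; this matches the power semicircle density with $\lambda=1/2$, which is flat. For the induction step I will exploit the stick-breaking representation of the uniform Dirichlet: if $B\sim\mathrm{Beta}(1,n-1)$ and $(R'_{2},\ldots,R'_{n})\sim D_{n-1}(1,\ldots,1)$ are independent, then $(B,(1-B)R'_{2},\ldots,(1-B)R'_{n})\sim D_{n}(1,\ldots,1)$. This yields the decomposition $S_{n}\stackrel{d}{=}BX_{1}+(1-B)S'_{n-1}$ with $S'_{n-1}$ independent of $(B,X_{1})$, and by the induction hypothesis $S'_{n-1}$ is power semicircle with parameter $(n-2)/2$. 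The conjecture therefore collapses to a single key lemma: if $X$ is arcsin on $(-a,a)$, $Y$ is power semicircle with parameter $\lambda$ on $(-a,a)$, and $B\sim\mathrm{Beta}(1,2\lambda+1)$ are independent, then $BX+(1-B)Y$ is power semicircle with parameter $\lambda+\frac{1}{2}$.

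The hard part will be proving this beta-mixing lemma. My plan is to compute the characteristic function of $BX+(1-B)Y$ by integrating against the density of $B$, producing an integral of $J_{0}(atb)\,J_{\lambda}(at(1-b))$ against a beta weight, and then recognising the result as $J_{\lambda+1/2}(at)$ with the correct prefactor via a Sonine-type Bessel identity. Should the Bessel manipulations prove cumbersome, the same lemma can be verified by matching all even moments, since both sides are compactly supported and thus moment-determined: the moment identities then reduce to standard beta and gamma function manipulations together with a binomial expansion. Either route is packaged naturally by the generalized unimodality framework $(2)$ in the paper, which is precisely what yields the promised simpler proof.
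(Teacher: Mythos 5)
Your proposal is correct, but it takes a genuinely different route from the paper. The paper proves the conjecture in one shot by moments: expand $E(S_n^r)$ multinomially, insert the $D_n(1,\ldots,1)$ moments and the arcsin moments, collapse the resulting sum with a Dirichlet--multinomial normalization lemma (Lemma~\ref{lemm}), recognize the even moments of the power semicircle law, and finish with moment determinacy for bounded support (Carleman). You instead induct on $n$ via the stick-breaking property of the flat Dirichlet, $S_n\,{\buildrel d \over =}\,BX_1+(1-B)S'_{n-1}$ with $B\sim\mathrm{Beta}(1,n-1)$, reducing everything to your beta-mixing lemma; that lemma is indeed true, and your primary route through a Sonine-type convolution does close: the classical formula
\[
\int_0^z J_0(u)\,(z-u)^{\lambda}J_{\lambda}(z-u)\,du=\frac{\Gamma(\lambda+\frac12)}{\sqrt{2}\,\Gamma(\lambda+1)}\,z^{\lambda+\frac12}J_{\lambda+\frac12}(z)
\]
(Watson's treatise, or an immediate consequence of $\mathcal{L}[u^{\nu}J_{\nu}(u)](s)=2^{\nu}\Gamma(\nu+\tfrac12)\pi^{-1/2}(s^{2}+1)^{-\nu-\frac12}$) turns your mixing integral into exactly $\Gamma(\lambda+\tfrac32)\left(2/(at)\right)^{\lambda+\frac12}J_{\lambda+\frac12}(at)$, constants included, so no moment-determinacy argument is even needed. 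What your approach buys is a structural explanation --- the power semicircle family is closed under this specific beta mixing with an arcsin component, which is why the uniform Dirichlet average lands on $\lambda=(n-1)/2$ --- and it identifies the limit law directly at the characteristic-function level. What the paper's approach buys is exactly what it advertises: only first-principles computations (no Bessel or Laplace-transform machinery), and a calculation that transfers verbatim to Lemma~\ref{lem2} and hence to the generalized arcsin/Beta characterization of Theorem~\ref{theorem2}, including the converse direction via Remark~\ref{rem1}, which your Bessel route does not give without further work (your moment-matching fallback would, but then you are essentially back to the paper's method).
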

This conjecture had been proved first for the cases of $n=2,3,4$ and later for all $n$'s; see \cite{homei2013}.  In this short note we prove the above conjecture, and generalize it to generalized arcsin distributions,  by employing  simple methods of analysis based on first
principles which is appropriate for classroom use in advanced
undergraduate or elementary graduate courses in probability and
statistics.

\section{The Main Result}
In order to prove our main result, we need the following lemma.

\begin{lemma}\label{lemm}
For all positive integers $r \in \mathbb N$, we have
$$\sum_{i_1+\cdots+i_n=r}^{}{r\choose
i_{1}, i_{2}, \cdots, i_{n}}
\frac{\Gamma(\frac{1}{2}+i_1)}{\Gamma(\frac{1}{2})}\cdots\frac{\Gamma(\frac{1}{2}+i_n)}
{\Gamma(\frac{1}{2})}=\frac{\Gamma(\frac{n}{2}+r)}
{\Gamma(\frac{n}{2})}.$$
\end{lemma}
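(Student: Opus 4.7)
The plan is to prove the identity by a generating-function argument based on the binomial series. The first step is to notice that the ratios appearing on both sides are Pochhammer symbols: $\Gamma(\tfrac12+i)/\Gamma(\tfrac12)=(\tfrac12)_i$ and $\Gamma(\tfrac{n}{2}+r)/\Gamma(\tfrac{n}{2})=(\tfrac{n}{2})_r$. Dividing the claimed identity through by $r!$ and expanding the multinomial coefficient reduces the statement to
$$\sum_{i_1+\cdots+i_n=r}\prod_{k=1}^{n}\frac{(\tfrac12)_{i_k}}{i_k!}=\frac{(\tfrac{n}{2})_r}{r!}.$$

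The second step is to invoke the classical binomial series, valid for $|x|<1$,
$$(1-x)^{-1/2}=\sum_{i=0}^{\infty}\frac{(\tfrac12)_i}{i!}\,x^i,$$
raise both sides to the $n$-th power, and apply the (absolutely convergent) Cauchy product to get
$$(1-x)^{-n/2}=\sum_{r=0}^{\infty}\left(\sum_{i_1+\cdots+i_n=r}\prod_{k=1}^{n}\frac{(\tfrac12)_{i_k}}{i_k!}\right)x^r.$$
Comparing with the direct expansion $(1-x)^{-n/2}=\sum_{r\ge 0}\frac{(\tfrac{n}{2})_r}{r!}x^r$ and equating the coefficient of $x^r$ yields the reduced identity; multiplying back through by $r!$, which converts $r!/(i_1!\cdots i_n!)$ into the multinomial coefficient, gives exactly the statement of the lemma.

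I do not foresee any real obstacle: convergence on $|x|<1$ is classical, the Cauchy product is justified by absolute convergence, and two power series representing the same analytic function have equal coefficients. If a fully self-contained, series-free derivation is preferred (in line with the ``elementary methods'' flavour advertised in the introduction), the same identity can be obtained by induction on $n$ from the Chu--Vandermonde relation for Pochhammer symbols, $(a+b)_r=\sum_{k=0}^{r}\binom{r}{k}(a)_k(b)_{r-k}$, applied with $a=\tfrac12$ and $b=\tfrac{n-1}{2}$; the inductive step then absorbs one further summation variable at a time. Either route is short, and the generating-function version is the cleanest to write down.
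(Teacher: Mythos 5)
Your argument is correct, and it proves the identity by a genuinely different route than the paper. The paper's proof is probabilistic: it compounds a multinomial distribution with a Dirichlet $D_n(\tfrac12,\dots,\tfrac12)$ prior on the cell probabilities, observes that the resulting Dirichlet--Multinomial probability mass function must sum to one over its support, and reads the identity off from that normalization (citing Wang et al.\ for the compound distribution). Your route instead rewrites the Gamma ratios as Pochhammer symbols, expands $(1-x)^{-1/2}$ by the binomial series, takes the $n$-fold Cauchy product to get $(1-x)^{-n/2}$, and equates coefficients of $x^r$; the convergence and coefficient-comparison steps you flag are indeed routine. Both arguments are short and both generalize verbatim to the weighted version the paper needs later (Lemma 3.2 with parameters $a_1,\dots,a_n$): the paper by replacing the Dirichlet prior with $D_n(a_1,\dots,a_n)$, yours by multiplying $(1-x)^{-a_1}\cdots(1-x)^{-a_n}=(1-x)^{-\sum a_i}$, or equivalently by iterating Chu--Vandermonde as you suggest. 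What the paper's proof buys is a one-line derivation that stays inside the statistical toolkit its audience already uses (and reuses the Dirichlet machinery central to the paper); what yours buys is a fully self-contained analytic verification that does not lean on an external reference for the Dirichlet--Multinomial normalizing constant, which arguably fits the paper's stated ``first principles'' aim even better.
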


\begin{proof}
Let the distribution of $f(x|\textbf{p})$ be multinomial with the
parameters ${\bf p}=(p_1,\cdots,p_n)$, and assume that ${\bf
p}=(p_1,\cdots,p_n)$ has Dirichlet distribution
$D_{n}(\frac{1}{2},\cdots,\frac{1}{2})$. So, the distribution of
$f(x)$ can be calculated, and the lemma is proved considering the
fact that the sum of $f(x)$ on its support equals to one. The function $f(x)$ is
called Dirichlet-Multinomial distribution (see chapters 6 and 7 of
\cite{K}).
\end{proof}


\begin{theorem}\label{theorem}
Assume that  the random variables $X_1,\cdots,X_n$ are independent
and have common Arcsin distribution on (-a,a). Then $S_n$ will have
a power semicircle distribution on (-a,a) with
$\lambda=\frac{n-1}{2}$.
\end{theorem}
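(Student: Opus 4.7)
Both the law of $S_n$ and the claimed power semicircle law are supported on the compact interval $(-a,a)$, so they are determined by their moments. My plan is therefore to compute the moments of both distributions and match them term by term, using Lemma \ref{lemm} as the combinatorial engine.

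First I would observe that the Arcsin law on $(-a,a)$ is symmetric, hence each $X_i$ is symmetric, hence $S_n$ is symmetric, so all odd moments vanish (matching the even power semicircle density). For the even moments, I expand
$$E[S_n^{2m}] \;=\; \sum_{i_1+\cdots+i_n=2m}\binom{2m}{i_1,\ldots,i_n}\,E\!\left[\prod_{k=1}^n R_k^{i_k}\right]\,\prod_{k=1}^n E[X_k^{i_k}],$$
using independence of ${\bf R}$ and ${\bf X}$. Since $E[X^{j}]=0$ when $j$ is odd, only tuples with every $i_k=2j_k$ survive. For those surviving tuples I would substitute the classical Arcsin moment $E[X^{2j}]=a^{2j}\,\Gamma(j+\tfrac12)/(\Gamma(\tfrac12)\,j!)$ and the Dirichlet$(1,\ldots,1)$ moment
$$E\!\left[\prod_{k=1}^n R_k^{2j_k}\right] \;=\; \frac{(n-1)!}{(n+2m-1)!}\prod_{k=1}^n (2j_k)!\,,$$
so that the factors $(2j_k)!$ cancel against the multinomial coefficient $(2m)!/\prod(2j_k)!$. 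What remains is essentially
$$E[S_n^{2m}] \;=\; a^{2m}\,\frac{(2m)!\,(n-1)!}{(n+2m-1)!\,m!}\,\sum_{j_1+\cdots+j_n=m}\binom{m}{j_1,\ldots,j_n}\prod_{k=1}^n\frac{\Gamma(\tfrac12+j_k)}{\Gamma(\tfrac12)},$$
and Lemma \ref{lemm} (applied with $r=m$) collapses the inner sum to $\Gamma(\tfrac{n}{2}+m)/\Gamma(\tfrac{n}{2})$.

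On the other side, I would compute the $2m$-th moment of the power semicircle density directly: the substitution $x=au$ followed by $t=u^2$ turns the integral into a Beta function, giving
$$\int_{-a}^{a} x^{2m}\,f(x;\lambda,a)\,dx \;=\; \frac{a^{2m}}{\sqrt{\pi}}\,\frac{\Gamma(\lambda+1)\,\Gamma(m+\tfrac12)}{\Gamma(\lambda+m+1)}.$$
Setting $\lambda=(n-1)/2$ and comparing with the expression for $E[S_n^{2m}]$ from the previous step reduces the theorem to a Gamma-function identity. The main (and only nontrivial) obstacle is this final verification: I would dispatch it by three applications of Legendre's duplication formula, namely expanding $(2m)!$, $(n-1)!$, and $(n+2m-1)!$ each as a product of two half-integer Gammas. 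The factors $\Gamma(n/2)$, $\Gamma(n/2+m)$, and the powers of $2$ and $\sqrt{\pi}$ will telescope, leaving exactly $\Gamma(\tfrac{n+1}{2})\Gamma(m+\tfrac12)/(\sqrt{\pi}\,\Gamma(\tfrac{n+1}{2}+m))$, as required. Moment uniqueness on $(-a,a)$ then finishes the proof.
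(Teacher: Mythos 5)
Your proposal is correct and follows essentially the same route as the paper: a multinomial expansion of $E[S_n^{2m}]$ using the Dirichlet$(1,\ldots,1)$ and Arcsin moments, collapse of the surviving even-index sum via Lemma~\ref{lemm}, simplification by the Legendre duplication formula, and moment determinacy on a bounded interval. The only differences are cosmetic (you keep the scale $a$ instead of normalizing via equation (3), and you verify the power semicircle moment by a Beta integral rather than citing the reference), so no further changes are needed.
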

\begin{proof}
Since, for any $\sigma$ and $\xi$, we have
$$\sigma S_n+\xi = \sum_{i=1}^{n}R_{i}(\sigma X_i+\xi), \eqno(3)
$$
then without loss of generality we can assume that $a=1$. We find the $r^{\rm
th}$ moment of $S_n$ as follows:
$$E({S_n}^r)=\sum_{i_1+\cdots+i_n=r}^{}\frac{r!}{{i_1}!\cdots {i_n}!}E({R_1}^{i_1}\cdots {R_n}^{i_n})E({X_1}^{i_1})\cdots E({X_n}^{i_n}).$$
By using the Dirichlet distribution, we have
$$E({S_n}^r)=\sum_{i_1+\cdots+i_n=r}^{}\frac{r!}{{i_1}!\cdots {i_n}!}{(n-1)!\frac{\Gamma(i_1+1)\cdots
\Gamma(i_n+1)}{\Gamma(r+n)}}E({X_1}^{i_1})\cdots
E({X_n}^{i_n}).\eqno(4)$$ One can show that
$$E({X_j}^{i_j})=\frac{1}{2}\frac{(1+(-1)^{i_j})
\Gamma(\frac{1}{2}+\frac{i_j}{2})}{\sqrt{\pi}\Gamma(1+\frac{i_j}{2})},
\textrm{ for } j=1,\cdots,n,$$(see page 153 of  \cite{bN}). So,
$E({S_n}^r)=$
$$\sum_{i_1+\cdots+i_n=r}^{}\frac{r!}{{i_1}!\cdots {i_n}!}{(n-1)!
\frac{\Gamma(i_1+1)\cdots \Gamma(i_n+1)}{\Gamma(r+n)}}$$
$${\frac{1}{2}\frac{(1+(-1)^{i_1})
\Gamma(\frac{1}{2}+\frac{i_1}{2})}{\sqrt{\pi}\Gamma(1+\frac{i_1}{2})}}\cdots
\frac{1}{2}\frac{(1+(-1)^{i_n})
\Gamma(\frac{1}{2}+\frac{i_n}{2})}{\sqrt{\pi}\Gamma(1+\frac{i_n}{2})}.$$
Since Arcsin distribution is symmetric about zero, the $r^{\rm th}$
moment is zero for odd r. Now we note that for even $r(=2k)$ if
$i_1+\cdots+i_n =r = 2k$ and if ${i_{j}}$ is odd then
$1+(-1)^{i_j}=0$, and so the corresponding summand will equal to
zero. Hence, we assume all ${i_{j}}'{s}$ to be even and so we write
$2i_j$ in place of $i_j$. Thus,
$$E({S_n}^{2k})=\sum_{2i_1+\cdots+2i_n=2k}^{}\frac{(2k)!}{(2{i_1})!\cdots (2{i_n})!}(n-1)!$$
$$\frac{\Gamma(2{i_1}+1)\cdots\Gamma(2{i_n}+1)}{\Gamma(2k+n)}
\frac{1}{2}\frac{2\Gamma(\frac{1}{2}+i_1)}{\Gamma(\frac{1}{2})\Gamma(1+i_1)}\cdots\frac{1}{2}
\frac{2\Gamma(\frac{1}{2}+i_n)}{\Gamma(\frac{1}{2})\Gamma(1+i_n)}$$
$$=\frac{(2k)!(n-1)!}{\Gamma(2k+n)}\sum_{i_1+\cdots+i_n=k}^{}\frac{1}{{i_1}!\cdots{i_n}!}
\frac{\Gamma(\frac{1}{2}+i_1)}{\Gamma(\frac{1}{2})}\cdots
\frac{\Gamma(\frac{1}{2}+i_n)}{\Gamma(\frac{1}{2})}$$
$$=\frac{(2k)!(n-1)!}{\Gamma(2k+n)k!}\sum_{i_1+\cdots+i_n=k}^{}\frac{k!}{{i_1}!\cdots{i_n}!}
\frac{\Gamma(\frac{1}{2}+i_1)}
{\Gamma(\frac{1}{2})}\cdots\frac{\Gamma(\frac{1}{2}+i_n)}{\Gamma(\frac{1}{2})}.$$
By using Lemma~\ref{lemm} we find that
$$E({S_n}^{2k})=\frac{(2k)!(n-1)!}{\Gamma(2k+n)k!}\frac{\Gamma(\frac{n}{2}+k)}
{\Gamma(\frac{n}{2})}.$$ Using the properties of gamma function  we finally
obtain
$$E({S_n}^r)=\left\{
\begin{array}{cl}
  0  & \textrm{ if }\ r=2k+1, \\
  \frac{\Gamma({k}+\frac{1}{2})\Gamma(\frac{n}{2}+\frac{1}{2})}
  {\sqrt{\pi}\Gamma({k}+\frac{n}{2}+\frac{1}{2})} & \textrm{ if }\
  r=2k.
\end{array}
\right.
$$
It can be easily shown that this  is the $r^{\rm th}$ moment of the power
semicircle distribution (see  \cite{bN}). Since $S_n$ is a bounded
random
 variable, its distribution is uniquely determined by its moments (Carleman's
 Theorem, see e.g. \cite{chung}). Thus the proof is complete.
\end{proof}


\begin{remark}{\rm
Using the equation (3) and choosing suitable $\sigma$ and $\xi$ we can assume
that the support of all $X_i$'s are $[0,1]$ in which case the
obtained moments are well-known Beta distributions.}\hfill $\blacklozenge$
\end{remark}

\section{The Case of Common Distributions}
We note that in case all $X_i$'s have a common distribution, Theorem~\ref{theorem}  provides a characterization
of Beta distributions, which is not studied before (see \cite{homei2013} and its references).

\begin{remark}\label{rem1}{\rm When $X_i$'s have a common distribution, their moments can be derived from the
moments of $S_{n}$ by using (4) noting that
$${r+n-1\choose r}E(S_n^r)=\sum_{i_1+\cdots+i_n=r}E(X_1^{i_1})\cdots E(X_n^{i_n})$$
implies
$${r+n-1\choose r}E(S_n^r)=E(X_1^r)+\cdots+E(X_n^r)+\sum_{i_1+\cdots+i_n=r,   i_1,\ldots,i_n\not=r}E(X_1^{i_1})\cdots E(X_n^{i_n})$$
 and so
 $${r+n-1\choose r}E(S_n^r)=nE(X_i^r)+\mathcal{F}\Big(E(X_i),E(X_i^2),\cdots,E(X_i^{r-1})\Big)$$
 for a function $\mathcal{F}$, which finally gives us
 $$E({X_i}^r)=\frac{1}{n}{n+r-1\choose
r}E({S_n}^r)-\mathfrak{F}\Big(E(S_n),E(S_n^2),\cdots,E(S_n^{r-1})\Big)$$
for a function $\mathfrak{F}$ with $r=1, 2, \cdots $ successively. Then the
distribution of $X_{i}$ is characterized (since $X_{i}$'s have
bounded support).}\hfill $\blacklozenge$
\end{remark}

Very similarly to Theorem~\ref{theorem}  we can characterize the
generalized Arcsin distributions  as follows. Before that we need a lemma;
for the definitions see e.g.  \cite{bN}.


\begin{lemma}\label{lem2}
For all positive integers $r \in \mathbb N$, we have
$$\sum_{i_1+\cdots+i_n=r}^{}{r\choose
i_{1}, i_{2}, \cdots, i_{n}}
\frac{\Gamma(a_1+i_1)}{\Gamma(a_1)}\cdots\frac{\Gamma(a_n+i_n)}
{\Gamma(a_n)}=\frac{\Gamma(r+\sum_{i=1}^na_i)}
{\Gamma(\sum_{i=1}^na_i)}.$$
\end{lemma}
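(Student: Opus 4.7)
The plan is to mimic the proof of Lemma~\ref{lemm} almost verbatim, generalizing only the Dirichlet parameters from $(\tfrac12,\ldots,\tfrac12)$ to arbitrary positive $(a_1,\ldots,a_n)$. Let $\mathbf{p}=(p_1,\ldots,p_n)$ follow the Dirichlet distribution $D_n(a_1,\ldots,a_n)$ and, conditionally on $\mathbf{p}$, let $(Y_1,\ldots,Y_n)$ be multinomial with index $r$ and probability vector $\mathbf{p}$. Marginalizing $\mathbf{p}$ out (using the Dirichlet normalizing constant, which is just a multivariate Beta integral) produces the Dirichlet--Multinomial mass function
$$P(Y_1=i_1,\ldots,Y_n=i_n)=\binom{r}{i_1,\ldots,i_n}\,\frac{\Gamma(\sum_k a_k)}{\Gamma(r+\sum_k a_k)}\prod_{j=1}^n\frac{\Gamma(a_j+i_j)}{\Gamma(a_j)},$$
supported on the lattice simplex $\{(i_1,\ldots,i_n)\in\mathbb{Z}_{\geq 0}^n:\sum i_j=r\}$. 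Summing these probabilities to $1$ and clearing the factor $\Gamma(\sum_k a_k)/\Gamma(r+\sum_k a_k)$ yields exactly the stated identity.

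If a purely analytic argument is preferred, the same identity follows from generating functions: starting from the binomial series $(1-x)^{-a}=\sum_{k\geq 0}\frac{\Gamma(a+k)}{\Gamma(a)}\frac{x^k}{k!}$, one multiplies $n$ such series together and uses $\prod_{j=1}^n(1-x)^{-a_j}=(1-x)^{-\sum_j a_j}$; comparing coefficients of $x^r$ and multiplying through by $r!$ gives the claim. This is the classical Chu--Vandermonde identity for rising factorials $(a)_k=\Gamma(a+k)/\Gamma(a)$.

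I foresee no real obstacle; the only ingredient beyond Lemma~\ref{lemm} is the explicit Dirichlet normalizing constant, and the structural parallel with the proof of Lemma~\ref{lemm} makes the argument essentially automatic. In effect, Lemma~\ref{lem2} stands to Lemma~\ref{lemm} as the general multivariate Beta function stands to the specific case with all parameters equal to $\tfrac12$, and it will be invoked in the same way in the subsequent characterization of generalized Arcsin (i.e.\ Beta) distributions via Remark~\ref{rem1}.
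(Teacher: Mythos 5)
Your proposal is correct and follows essentially the same route as the paper: both prove the identity by recognizing the summands as (unnormalized) Dirichlet--Multinomial probabilities with Dirichlet parameters $(a_1,\ldots,a_n)$ and summing the mass function to one, exactly as in the proof of Lemma~\ref{lemm}. Your additional generating-function (Chu--Vandermonde) argument is a valid alternative, but the main argument coincides with the paper's.
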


\begin{proof}
Just like the proof of Lemma~\ref{lemm}  using the  Dirichlet-Multinomial distribution in chapters 6 and 7 of
\cite{K}.
\end{proof}

\begin{theorem}\label{theorem2}
Assume that  the random variables $X_1,\cdots,X_n$ are independent
and have common  distribution on (-a,a). Then $S_n$ will have a Beta$(n\alpha, n(1-\alpha),-a, 2a)$ distribution on (-a,a) if and only
if  $X_1,\cdots,X_n$ have generalized Arcsin$(\alpha)$
distributions.
\end{theorem}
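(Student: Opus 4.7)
The plan is to mirror the proof of Theorem~\ref{theorem} and to establish both directions of the ``if and only if'' by the method of moments, replacing the arcsin-specific moments used there by the general moments of a Beta$(\alpha,1-\alpha)$ variable and invoking Lemma~\ref{lem2} in place of Lemma~\ref{lemm}. As a first step I would use the affine invariance in~(3) to reduce to $a=1$, and, for cleaner bookkeeping---the generalized Arcsin is in general not symmetric about zero, so the odd-moment shortcut of Theorem~\ref{theorem} is not available---further shift to the support $(0,1)$. Since the shape parameters of a Beta law are invariant under affine rescaling of its support, the final distributional conclusion on $(-a,a)$ is recovered by undoing the shift at the end.

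For the forward direction, suppose the common law of $X_1,\ldots,X_n$ is generalized Arcsin$(\alpha)$, i.e., Beta$(\alpha,1-\alpha)$ on $(0,1)$, so that $E(X_j^{i_j})=\Gamma(\alpha+i_j)/[\Gamma(\alpha)\,i_j!]$. Substituting into the multinomial expansion of $S_n^r$ and combining with the Dirichlet moment formula exactly as in (4) yields
$$E(S_n^r)=\frac{(n-1)!}{\Gamma(r+n)}\sum_{i_1+\cdots+i_n=r}{r\choose i_1,\ldots,i_n}\prod_{j=1}^{n}\frac{\Gamma(\alpha+i_j)}{\Gamma(\alpha)}.$$
Applying Lemma~\ref{lem2} with $a_1=\cdots=a_n=\alpha$ collapses the inner sum to $\Gamma(n\alpha+r)/\Gamma(n\alpha)$, leaving $E(S_n^r)=\Gamma(n)\,\Gamma(n\alpha+r)/[\Gamma(n\alpha)\,\Gamma(n+r)]$, which is precisely the $r$-th moment of a Beta$(n\alpha,n(1-\alpha))$ variable. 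Since $S_n$ is bounded, Carleman's theorem upgrades this moment identity to an equality in distribution.

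For the reverse direction, I would appeal directly to Remark~\ref{rem1}. Because the $X_i$'s share a common distribution on a bounded interval, the recursion
$$E(X_i^r)=\frac{1}{n}{n+r-1\choose r}E(S_n^r)-\mathfrak{F}\left(E(S_n),\ldots,E(S_n^{r-1})\right)$$
determines each moment of $X_i$ from the moments of $S_n$. Knowing that $S_n$ has the stated Beta law therefore pins down the entire moment sequence of $X_i$ uniquely; and the forward direction just established that the generalized Arcsin$(\alpha)$ law realizes exactly this sequence for $S_n$. Consequently the moments of $X_i$ must coincide with those of the generalized Arcsin$(\alpha)$ law, and Carleman's theorem promotes this to equality in distribution.

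The principal obstacle I anticipate is only the Gamma-function bookkeeping at the end of the forward direction: one must verify that the closed form produced by Lemma~\ref{lem2} is indeed the $r$-th Beta$(n\alpha,n(1-\alpha))$ moment in the form stated, and keep careful track of the affine change of support carrying the conclusion back from $(0,1)$ to $(-a,a)$. Both steps are routine manipulations of the same flavour as those already performed in the proof of Theorem~\ref{theorem}, and no genuine analytic difficulty arises beyond this verification.
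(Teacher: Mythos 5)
Your proposal is correct and follows essentially the same route as the paper, which proves Theorem~\ref{theorem2} by invoking Lemma~\ref{lem2} along the lines of Theorem~\ref{theorem} for the forward direction and Remark~\ref{rem1} for the converse. Your computation $E(S_n^r)=\Gamma(n)\Gamma(n\alpha+r)/[\Gamma(n\alpha)\Gamma(n+r)]$ and the moment-recursion argument for the reverse implication are exactly the details the paper leaves implicit.
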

\begin{proof}
 By Lemma~\ref{lem2} the theorem is proved in the lines of the proof of Theorem~\ref{theorem} and Remark~\ref{rem1}.
\end{proof}

\begin{remark}\label{rem0}
Though this Theorem (\ref{theorem2}) only applies under some very specific assumptions, it is a wild  generalization of Conjecture~\ref{conj}.  Also, some results of \cite{van87} and \cite{homei2012,homei2014} are special cases of Theorem~\ref{theorem2}.
\hfill $\blacklozenge$
\end{remark}

\begin{remark}\label{rem2} We
note that having the bounded support in this work plays an essential
role; but in case all $X_i$'s have Cauchy distributions,
by using  the  characteristic
function (i.e., the Fourier transform) and conditional expectation
one can overcome this problem;
see 
 \cite{homei2013} and the examples of its references.
\hfill $\blacklozenge$
\end{remark}

\begin{remark}\label{rem3}
In the moments method finding the desired distribution may need having sufficient information about the solution of the problem. Of course in using  the method of \cite{homei2014} one should know the Stieltjes transform of the distribution in question, but in the moments method one can approach the solution by the guesses resulted from calculating the moments sequentially.
\hfill $\blacklozenge$
\end{remark}

\begin{remark}\label{rem4}
The moments method that we used depends on the distributions which are to be characterized by their moments.  This holds for all the cases studied in the references of \cite{homei2013}, because all their distributions have bounded support. Though, some distributions which do not have bounded support could be characterized by their moments. So, this method can be applied to a large family of distributions, provided that the uniqueness conditions are satisfied.  It is remarkable that still many researches in various fields (from 1990 until now)   use the moments method for calculating the distributions; see for example \cite{barrera,jk,jk2,stoyanov,stoyanov2} among others.
\hfill $\blacklozenge$
\end{remark}

\section{Conclusions}
The method of this article provides an elementary and direct way for
calculating  the distribution of the inner product of certain
random vectors. So, this goes to say that the Stieltjes transform is
neither the only nor the best way for calculating the distribution of the
inner products of different random vectors, as long as there is no single example which cannot be handled by our proposed method. Also, this
method can be extended for the cases that $X_1,\cdots,X_n$ have
 Beta distributions, which is intended to be studied in a future paper.




\end{document}